\newcommand{\R}{\mathbb R}
\newcommand{\D}{\text{d}}
\newcommand{\dd}{\partial}
\newcommand{\db}{\bar\partial}
\newtheorem{prop}{Proposition}
\newtheorem{cor}[prop]{Corollary}
\newtheorem{lem}[prop]{Lemma}
\newtheorem{theor}[prop]{Theorem}
\begin{document}
\title{On the metrizability of $m$-Kropina spaces with closed null 1-form}

\author{Sjors Heefer}
\email{s.j.heefer@tue.nl}
\affiliation{Department of Mathematics and Computer Science, Eindhoven University of Technology, Eindhoven, The Netherlands}
\author{Christian Pfeifer}
\email{christian.pfeifer@zarm.uni-bremen.de}
\affiliation{ZARM, University of Bremen, 28359 Bremen, Germany.}

\author{Jorn van Voorthuizen}
\email{jornvanvoorthuizen@gmail.com}
\affiliation{Department of Mathematics and Computer Science, Eindhoven University of Technology, Eindhoven, The Netherlands}

\author{Andrea Fuster}
\email{a.fuster@tue.nl}
\affiliation{Department of Mathematics and Computer Science, Eindhoven University of Technology, Eindhoven, The Netherlands}

\begin{abstract}
We investigate the local metrizability of Finsler spaces with $m$-Kropina metric $F = \alpha^{1+m}\beta^{-m}$, where $\beta$ is a closed null 1-form. We show that such a space is of Berwald type if and only if the \mbox{(pseudo-)Riemannian} metric $\alpha$ and 1-form $\beta$ have a very specific form in certain coordinates. In particular, when the signature of $\alpha$ is Lorentzian, $\alpha$ belongs to a certain subclass of the Kundt class and $\beta$ generates the corresponding null congruence, and this generalizes in a natural way to arbitrary signature. We use this result to prove that the affine connection on such an $m$-Kropina space is locally metrizable by a \mbox{(pseudo-)Riemannian} metric if and only if the Ricci tensor constructed form the affine connection is symmetric. In particular we construct all counterexamples of this type to Szabo's metrization theorem, which has only been proven for positive definite Finsler metrics that are regular on all of the slit tangent bundle.
\end{abstract}

\maketitle
\tableofcontents

%%%%%%%%%%%%%%%%%%%%%%%%%%%%%%%%%%%%%%%%%%%%%%%%%%%%%%%

\section{Introduction}
%When a mathematical framework is extended and embedded into a larger context, it is essential to verify which properties of the original framework still hold in the more general setting, and which not.\\

%A prime example is the extension of Riemannian geometry, i.e\ the description of the geometry of manifolds in terms of the Levi-Civita connection of a positive definite metric tensor, to \mbox{(pseudo-)Riemannian} geometry, i.e.\ the change from the positive definite metric to a metric of arbitrary signature. Even though many algebraic expressions and relations carry over from the original framework to its extension, many properties of the geometry of a manifold change. One most famous example is the existence of non-trivial null vectors and the resulting consequences. For \mbox{(pseudo-)Riemannian} geometry, these properties of the geometry of a manifold which depend on the signature of the metric under consideration are well investigated and studied.\\

The study of differences and similarities between positive definite Finsler geometry and indefinite Finsler geometry is still in its beginnings and far from complete \cite{Voicu2018,Fuster:2020upk,Javaloyes:2022hph}. The most prominent application of indefinite (to be precise Lorentzian) Finsler geometry is the one of Finsler spacetimes in classical and quantum gravitational physics \cite{Tavakol_1986,Voicu:2009wi,Pfeifer:2011xi,Lammerzahl:2018lhw,Pfeifer_2019,Hohmann_2019,Hohmann:2019sni,Lobo:2020qoa,Addazi:2021xuf, Kapsabelis:2022plf,Carvalho:2022sdz,Garcia-Parrado:2022ith,Zhu:2022blp}, which recently put \mbox{(pseudo-)Riemannian} geometry and its applications into the focus of interest \cite{Pfeifer:2011tk,Minguzzi:2014fxa,Gomez-Lobo:2016qik,Minguzzi2016,Javaloyes2018,Hohmann:2020mgs,Minculete_2021,Aazami:2022bib,Javaloyes:2022fmp}. Hence, a better understanding of the properties of indefinite Finsler geometry would be of great interest for physics as well as for mathematics. \\

%Among all classes of possible indefinite Finsler geometries, those of Berwald type \cite{Berwald1926} can be regarded as Finsler geometries close to \mbox{(pseudo-)Riemannian} geometry, since they define an affine geometry (a geometry determined by an affine connection) on the manifold in consideration. An immediate question that arises is, under which conditions are the Berwald geometries metriziable, i.e.\ under which condition is the affine connection the Levi-Civita connection of some \mbox{(pseudo-)Riemannian} metric? In the positive definite case the answer is simple: all Berwald Finsler geometries are metrizable, a fact that is known as by Szabo's Theorem \cite{Szabo}. It is also known that for indefinite Berwald Finsler geometry this statement is in general wrong, as counterexamples have shown \cite{Fuster:2020upk}. However, there also exist non-trivial indefinite Berwald Finsler geometries which actually are metrizable. Thus a complete classification of indefinite Berwald Finsler geometries which are metrizable and which not is needed.

Berwald spaces constitute an important class of Finsler spaces. They can be defined by the property that the canonical (Cartan) non-linear connection reduces to a linear connection on the tangent bundle \cite{Berwald1926}. It is natural to ask under what conditions this linear connection is (Riemann) \textit{metrizable}, in the sense that there exists a \mbox{(pseudo-)Riemannian} metric that has the given linear connection as its Levi-Civita connection. In positive definite Finsler geometry the answer to this question was given in 1988 by Szabo's well-known metrization theorem \cite{Szabo}, which guarantees that in this case the connection is \textit{always} metrizable. In the more general context, where the fundamental tensor is allowed to have arbitrary, not necessarily positive definite, signature, the situation is more complex. It only became clear very recently that Szabo's metrization theorem cannot be extended in general to arbitrary signatures~\cite{Fuster_2020}. In other words, there exist Finsler metrics of Berwald type (most examples being not positive definite and not smooth on the entire slit tangent bundle) for which the affine connection is not metrizable by a \mbox{(pseudo-)Riemannian} metric. 
%\CP{The reason why counter examples to Szabo's theorem have been found in indefinite Finsler geometry, is that in this case the Finsler function must not be smooth on the entire slit tangent bundle. In the positive definite case this property is usually assumed and ensures the validity of Szabo's theorem.} \\

It would be of great interest to know the precise conditions for metrizability in this more general context. As a first step in this direction, we investigate in this article the metrizability of a specific class of Finsler metrics, namely $m$-Kropina metrics with a closed null 1-form. The main result in this article, Theorem \ref{theorem:main}, states that the affine connection of such a space is metrizable if and only if the Ricci tensor constructed from the affine connection is symmetric, and gives a second equivalent characterization in terms of the local expression of the defining \mbox{(pseudo-)Riemannian} metric and 1-form. \\

$m$-Kropina metrics, also called generalized Kropina metrics, were introduced by Hashiguchi, Hojo and Matsumoto in \cite{Hashiguchi1973} as a generalization of the standard Kropina metric \cite{Kropina}. While the original Kropina metric has found a wide range of applications, $m$-Kropina metrics gained some popularity in the physics literature when it was discovered that they can be used to describe a modification of special relativity with local anisotropy \cite{Bogoslovsky,Bogoslovsky1973}, named very special relativity (VSR) \cite{Cohen:2006ky, Gibbons:2007iu} and later generalized to \textit{Very General Relativity} (VGR) \cite{Fuster:2018djw} or \textit{General Very Special Relativity} (GVSR) \cite{Kouretsis:2008ha} in order to account for spacetime curvature, leading to physical predictions from curved $m$-Kropina spacetime geodesics \cite{Elbistan:2020mca} and pp-waves\cite{Fuster:2015tua}. \\

%In this work we investigate this question for $m$-Kropina metrics with locally exact null 1-form. Kropina spaces were introduced in \cite{Kropina} and gained some popularity in their application as modification of special relativity with local anisotropy \cite{Bogoslovsky,Bogoslovsky1973}, which later was named very special relativity (VSR) \cite{Cohen:2006ky}. The generalization of VSR to curved spacetime then was named general very special relativity and connected to Finsler geometry \cite{Gibbons:2007iu}. Since then, several properties of m-Kropina spaces/spacetimes have been studied, among others under which condition they are of Berwald type \cite{handbook_Finsler_vol2,Fuster:2018djw}, how can m-Kropina gravitational pp-waves be constructed \cite{Fuster:2015tua} and what is the geodesic motion on these geometries \cite{Elbistan:2020mca}. \CP{Further points to mention explicitly?}

%In this work we consider $m$-Kropina spaces constructed from a \mbox{(pseudo-)Riemannian} metric of arbitrary signature and a 1-form that is locally exact

%We will show that m-Kropina Berwald Finsler geometries with null 1-form are metrizable if and only if, the \mbox{(pseudo-)Riemannian} metric, which defines the m-Kropina geometry, has a very specific form. It must belong to a certain subclass of the Kundt class of spacetime metrics. 

The structure of this article is as follows. We start in section \ref{sec:FSFST} by recalling the basic notions of Finsler geometry that are relevant for our purpose and Szabo's metrization theorem for positive definite Berwald spaces. In section \ref{sec:VGR} we recall the definition of $m$-Kropina metrics and the precise necessary and sufficient condition under which they are of Berwald type (section \ref{sec:berwald_condition}). In fact we provide a new proof of this Berwald condition in apppendix \ref{app:BerwCondProof}. Subsequently in section \ref{sec:metrizability} we specialize to $m$-Kropina metrics constructed from a \mbox{(pseudo-)Riemannian} metric $\alpha$ and a 1-form $\beta$ that is null with respect to this metric and closed. We first prove Lemma \ref{lem:coordinates}, stating that such a space is of Berwald type if and only if $\alpha$ and $\beta$ have a very specific form in local coordinates. In particular, when the signature of $\alpha$ is Lorentzian, $\alpha$ belongs to a certain subclass of the Kundt class and $\beta$ generates the corresponding null congruence. This construction generalizes in a natural way to arbitrary signature. The coordinates introduced in this lemma allow us to find a simple expression for the linear connection coefficients and the skew-symmetric part of the affine Ricci tensor. We then prove our main result, Theorem \ref{theorem:main}, providing two equivalent necessary and sufficient conditions for metrizability: symmetry of the affine Ricci tensor and a local condition for the coordinate expressions of the \mbox{(pseudo-)Riemannian} metric $\alpha$. We end with a conclusion and discussion of the work in section \ref{sec:conc}.

\section{Finsler geometry}\label{sec:FSFST}
Finsler geometry is a natural extension of Riemannian geometry \cite{Finsler,Bao,Szilasi}. Given the philosophy that the length of a curve is obtained by integrating the norm of the tangent vector along the cuve, Finsler geometry provides the most general way of assigning, smoothly, a length to curves on a smooth manifold. While in Riemannian geometry the length of a tangent vector is given by a quadratic (metric-induced) norm, Finsler geometry relaxes this quadratic requirement. \\

First of all some remarks about notation are in order. Throughout this work we will usually work in local coordinates, i.e., given a smooth manifold $M$ we assume that some chart $\phi:U\subset M\to \R^n$ is provided, and we identify any $p\in U$ with its image $(x^1,\dots,x^n) = \phi(p)\in\R^n$. For $p\in U$ each $Y\in T_pM$ in the tangent space to $M$ at $p$ can be written as $Y = y^i\partial_i\big|_p$, where the tangent vectors $\partial_i \equiv \frac{\partial}{\partial x_i}$ furnish the chart-induced basis of $T_pM$. This provides natural local coordinates on the tangent bundle $TM$ via the chart
\begin{align}
\tilde\phi: \tilde U \to \R^n\times\R^n,\qquad \tilde U = \bigcup_{p\in U} \left\{p\right\}\times T_p M\subset TM,\qquad \tilde\phi(p,Y) = (\phi(p),y^1,\dots,y^n)\eqqcolon (x,y).
\end{align}
These local coordinates on $TM$ in turn provide a natural basis of its tangent spaces $T_{(x,y)}TM$, namely
\begin{align}
\bigg\{\partial_i\equiv \frac{\partial}{\partial x^i}, \bar{\partial}_i\equiv \frac{\partial}{\partial y^i}\bigg\}.
\end{align}

%++++++++++++++++++++++++++++++++++++++++++++++++%
\subsection{Finsler spaces}

For our purposes, a Finsler space is triple $(M,\mathcal A,F)$, where $M$ is a smooth manifold, $\mathcal A$ is a conic subbundle\footnote{By conic subbundle we mean a non-empty open subset $\mathcal A\subset TM\setminus\{0\}$ such that for any $(x,y)\in\mathcal A$ it follows that $(x,\lambda y)\in\mathcal A$ for any $\lambda>0$.} of $TM\setminus\{0\}$ with non-empty fibers and $F$, the so-called Finsler metric, is a continuous map $F: TM\setminus\{0\}\to \mathbb R$, smooth on $\mathcal A$, that satisfies the following axioms:
\begin{itemize}
	\item $F$ is positively homogeneous of degree one with respect to $y$:
	\begin{align}
	F(x,\lambda y) =\lambda F(x, y)\,,\quad \forall \lambda>0\,;
	\end{align}
	\item The \textit{fundamental tensor}, with components $g_{ij} = \db_i\db_j \left(\frac{1}{2}F^2\right)$, is non-degenerate on $\mathcal A$.
\end{itemize}
In the positive definite setting (i.e. if $g_{ij}$ is assumed to be positive definite) one usually requires that $\mathcal A = TM\setminus\{0\}$. In the more general setting, however, this would exclude almost all interesting examples that have been studied in the literature. In fact there there is no consensus on a standard definition of Finsler space when the signature is indefinite (see e.g. \cite{Beem,Asanov,Pfeifer:2011tk, Pfeifer:2011xi, Javaloyes2014-1, Javaloyes2014-2}). A fundamental result essential for doing computations in Finsler geometry is Euler's theorem for homogeneous functions, which states that if a function $f$ is positively homogeneous of degree $r$, i.e., $f(\lambda y) =\lambda^r f(y)$ for all $\lambda>0$, then $y^i\frac{\dd f}{\dd y_i}(y) = r f(y)$. In particular, this implies the identity
\begin{align}
g_{ij}(x,y)y^i y^j = F(x,y)^2.
\end{align}
The coefficients of the \textit{Cartan non-linear connection}, the unique homogeneous (non-linear) connection on $TM$ that is smooth on $\mathcal A$, torsion-free and compatible with the Finsler metric, can be expressed as
\begin{align}
N^i_j(x,y) = \frac{1}{4}\bar{\partial}_j \bigg(g^{ik}\big(y^l\partial_l\bar{\partial}_k F^2 - \partial_k F^2\big)\bigg).
\end{align}
Torsion-freeness is the property that $\bar\partial_iN^k_j = \bar\partial_jN^k_i$, and metric-compatibility is the property that $\delta_k F^2 $ $\equiv \left(\partial_k-N^\ell_k\bar\partial_\ell\right)F^2 $ $=0$. Alternatively, metric compatibility can be defined by the property that $\nabla g_{ij} \equiv y^k\delta_k g_{ij} - N^k_i g_{kj} - N^k_j g_{ki} = 0$. For torsion-free homogeneous connections the latter definition of metric-compatibility is equivalent to the former. The curvature tensor, Finsler Ricci scalar and Finsler Ricci tensor %\footnote{This curvature scalar was introduced in \cite{Akbar-Zadeh1988}.}
of $(M,F)$ are defined, respectively, as
\begin{align}\label{eq:definition_curvatures}
R^i{}_{jk}(x,y) = -[\delta_j,\delta_k]^i =  \delta_j N^i_k(x,y)-\delta_k N^i_j(x,y),\qquad \text{Ric}(x,y) = R^i{}_{ij}(x,y)y^j,\qquad R_{ij}(x,y) = \frac{1}{2}\db_i \db_j\text{Ric}.
\end{align}

%++++++++++++++++++++++++++++++++++++++++++++++++%
\subsection{Berwald spaces}
\label{sec:Berwald}

A Finsler space is said to be of Berwald type\footnote{See \cite{Szilasi2011} for an overview of the various equivalent characterizations of Berwald spaces and \cite{Pfeifer:2019tyy} for a more recent one in terms of a first order partial differential equation.} if the Cartan non-linear connection defines a linear connection on $TM$, or in other words, an affine connection on the base manifold, in the sense that the connection coefficients are of the form
\begin{align}
%\label{eq:linear_connection}
N^i_j(x,y) = \Gamma^i_{jk}(x)y^k
\end{align}
for a set of smooth functions $\Gamma^i_{jk}:M\to\R$. From the transformation behavior of $N^i_j$ it follows that the functions $\Gamma^i_{jk}$ have the correct transformation behavior to be the connection coefficients of a (torsion-free) affine connection on $M$. We will refer to this affine connection as the associated affine connection, or simply \textit{the} affine connection on the Berwald space. %From the fact that $N^a_b$ is torsion free, i.e., $\db_k N^i_j - \db_j N^i{}_k=0$, it then follows that $\Gamma^a_{bc} = \Gamma^a_{cb}$, i.e., the associated connection is torsion free as well. 
In addition to the curvature tensors defined in Eq. \eqref{eq:definition_curvatures} one may define additional curvature tensors for Berwald spaces, namely the ones associated to the uniquely defined affine connection. 

\begin{align}\label{eq:affine_curvatures}
\bar R_l{}^i{}_{jk}= 2\partial_{[j} \Gamma^i_{k]l} + 2\Gamma^i_{m[j}\Gamma^m_{k]l}, \qquad \bar R_{lk} = \bar R_l{}^i{}_{ik},
\end{align}
where we have employed the notation $T_{[ij]} = \frac{1}{2}\left(T_{ij}-T_{ji}\right)$ and $T_{(ij)} = \frac{1}{2}\left(T_{ij}+T_{ji}\right)$ for (anti-)symmetrization. We will refer to these as the \textit{affine curvature tensor} and the \textit{affine Ricci tensor}, respectively. We note that $\bar R_l{}^i{}_{jk}$ coincides (up to some reinterpretations) with the $hh$-curvature tensor of the Chern-Rund connection. A straightforward calculation reveals the following relation between the different curvature tensors
\begin{align}
\label{eq:symm_ricci}
R^j{}_{kl} = \bar R_i{}^j{}_{kl}(x)y^i, \qquad \text{Ric} = \bar R_{ij}(x)y^i y^j, \qquad R_{ij} = \frac{1}{2}\left(\bar R_{ij}(x) + \bar R_{ji}(x)\right).
\end{align}

It is appropriate to stress here that, although $R_{ij}$ and $\bar R_{ij}$ coincide in the positive definite setting and more generally whenever the Finsler function is defined on all of $\mathcal A = TM\setminus\{0\}$ \cite{Fuster_2020}, this is \textit{not} true in general, as $\bar R_{ij}$ need not be symmetric. As this distinction is essential for our results we end this section with a schematic overview of some important properties of the two Ricci tensors.\\

\noindent \textbf{Ricci tensors}
\begin{enumerate}
\item The \textit{Finsler Ricci Tensor} $ R_{ij}$ is constructed from the canonical non-linear connection associated to $F$, according to Eq. \eqref{eq:definition_curvatures}. The Finsler Ricci Tensor:
\begin{itemize}
\item Always exists;
\item Is symmetric, by definition;
\item Contains the same information as the \textit{Finsler Ricci scalar}. More precisely, Ric $=  R_{ij} y^i y^j$  and $R_{ij} = \frac{1}{2}\db_i \db_j\text{Ric}$.
\end{itemize}
\item The \textit{affine Ricci Tensor} $\bar R_{ij}$ is constructed from the affine connection associated to $F$ according to Eq. \eqref{eq:affine_curvatures}. The affine Ricci Tensor:
\begin{itemize}
\item Exists only for Berwald spaces, because otherwise there is no uniquely defined affine connection;
\item Coincides with the Ricci tensor\footnote{In principle one could list the Ricci tensors associated to the four well-known linear connections in Finsler geometry here as well, but we will not do so here. For Berwald spacetimes they are all identical anyway.} constructed from any of the four well-known linear connections associated to $F$ (Chern-Rund, Berwald, Cartan, Hashiguchi);
\item Is \textit{not} necessarily symmetric (except in the positive definite case). Its symmetrization coincides with the Finsler Ricci Tensor, see Eq. \eqref{eq:symm_ricci}.
\end{itemize}
\end{enumerate}

In this work we are primarily concerned with the \textit{affine Ricci tensor}, and in particular its property of being in general \textit{not} symmetric, as it can be used to characterize whether a given Finsler space is metrizable or not.

%%%%%%%%%%%%%%%%%%%%%%%%%%%%%%%%%%%%%%%%%%%%%%%%%%%%%%%

\subsection{Szabo's metrization theorem}

Given a Finsler space of Berwald type, the Cartan non-linear connection defines a linear connection on $TM$, by definition. Hence the natural question arises whether there exists a \mbox{(pseudo-)Riemannian} metric (desirably of the same signature) that has this connection  as its Levi-Civita connection. Simply put, is every Berwald space metrizable? For positive definite Finsler spaces defined on all of $TM\setminus\{0\}$ the answer is affirmative, as proven by Szabo \cite{Szabo}.

\begin{theor}[Szabo's metrization theorem]
Any positive definite Berwald space is metrizable by a Riemannian metric.
\end{theor}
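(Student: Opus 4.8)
The plan is to prove metrizability via a holonomy argument, exploiting the fact that for a Berwald space the defining Finsler norm is parallel with respect to the associated affine connection $\Gamma^i_{jk}$. Concretely, I would first recall the standard characterization of Berwald spaces (among the equivalent conditions alluded to in the cited overviews of Berwald geometry): parallel transport by $\Gamma$ maps tangent spaces to one another preserving the Finsler norm, i.e.\ if $y(t)$ is parallel along a curve $x(t)$ then $F(x(t),y(t))$ is constant, equivalently the fundamental tensor $g_{ij}(x,y)$ is covariantly constant along the flow of parallel vector fields. This is the one place where the Berwald hypothesis enters; everything afterwards is a statement about the linear holonomy of a torsion-free affine connection that happens to preserve a distinguished hypersurface field.

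Next I would fix a base point $p$ and consider the linear holonomy group $\mathrm{Hol}_p\subset GL(T_pM)$ of $\Gamma$, i.e.\ the group generated by parallel transport around loops based at $p$. By the property above, every element of $\mathrm{Hol}_p$ preserves the \emph{indicatrix} $\mathcal I_p=\{\,y\in T_pM:F(p,y)=1\,\}$. Here positive-definiteness is crucial: in that case $F(p,\cdot)$ is continuous, positive and $1$-homogeneous, so it is bounded below by $c\,|y|$ for some $c>0$, whence $\mathcal I_p$ is a smooth, strictly convex, \emph{compact} hypersurface bounding a bounded open convex body $\Omega_p$ with the origin in its interior. A linear map $g$ with $g(\Omega_p)=\Omega_p$ then satisfies $B_r\subset\Omega_p\subset B_R$ for some $0<r\le R$, giving $\|g\|\le R/r$ and the same bound for $g^{-1}$; hence $\mathrm{Hol}_p$ is bounded, so precompact, and its closure $G\coloneqq\overline{\mathrm{Hol}_p}$ is a compact Lie group.

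I would then average any fixed positive-definite inner product on $T_pM$ over $G$ against its normalized Haar measure to obtain a $G$-invariant, in particular full-$\mathrm{Hol}_p$-invariant, inner product $h_p$. Because $h_p$ is invariant under the \emph{full} holonomy group (including transport around non-contractible loops), parallel transport of $h_p$ along curves is path-independent, so it defines a Riemannian metric $h$ on all of $M$ that is by construction parallel, $\nabla^\Gamma h=0$. Since $\Gamma$ is torsion-free and compatible with $h$, uniqueness of the Levi-Civita connection forces $\Gamma$ to be exactly the Levi-Civita connection of $h$, so the affine connection is metrizable by $h$.

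The main obstacle, and the reason the theorem is special to the positive-definite case, is the compactness of the indicatrix: this is precisely what makes the holonomy group precompact and the Haar-averaging step available. In indefinite signature $\mathcal I_p$ is unbounded, a holonomy-invariant \emph{pseudo}-norm no longer forces $\mathrm{Hol}_p$ to be precompact, and there is in general no invariant inner product to average out — exactly the gap the present paper exploits to build counterexamples. (An alternative, more computational route to the same metric is to average the fundamental tensor $g_{ij}(p,y)$ directly over $\mathcal I_p$ against its Riemannian volume; the integral converges precisely because $\mathcal I_p$ is compact, and parallelism of $g_{ij}$ and of $\mathcal I_p$ under $\Gamma$ makes the averaged tensor parallel as well.)
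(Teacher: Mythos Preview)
The paper does not actually prove Szabo's theorem; it is quoted as background with a citation to \cite{Szabo} and the remark that ``the proof of this theorem relies on averaging procedures \cite{CrampinAveraging}.'' So there is no proof in the paper to compare against.

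Your argument is correct. The key steps are all sound: the Berwald property is equivalent to linear parallel transport preserving $F$ (this follows from $\delta_jF^2=0$ once $N^i_j=\Gamma^i_{jk}y^k$), so $\mathrm{Hol}_p$ preserves the indicatrix; positive definiteness together with $\mathcal A=TM\setminus\{0\}$ makes the indicatrix a compact convex body, forcing $\mathrm{Hol}_p$ to be bounded and hence precompact in $GL(T_pM)$; Haar averaging over the closure then yields a $\mathrm{Hol}_p$-invariant inner product, whose parallel propagation gives a metric $h$ with $\nabla^{\Gamma}h=0$, and torsion-freeness identifies $\Gamma$ as the Levi--Civita connection of $h$. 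The only point you might make explicit is that $M$ should be taken connected (or the construction done componentwise) so that parallel transport from $p$ reaches every point.

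Methodologically, your holonomy/Haar-measure route is a clean variant of the classical proof. The alternative you sketch in the final paragraph---averaging $g_{ij}(p,\cdot)$ over the compact indicatrix---is closer to the ``averaging procedures'' the paper alludes to and to Szabo's and Crampin's treatments. Both approaches hinge on the same compactness, and your diagnosis of why they fail in indefinite signature (non-compact indicatrix, hence no precompact holonomy, hence no averaging) is exactly the point the paper is making.
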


The proof of this theorem relies on averaging procedures \cite{CrampinAveraging} for which it is essential that the Finsler function $F$ is defined everywhere on $TM\setminus\{0\}$. In the case of Finsler \textit{spacetimes}, however, the domain where $F$ is defined is typically only a conic subset of $TM\setminus\{0\}$ and hence the classical proof does not extend to this case. It was indeed shown in \cite{Fuster_2020} that Szabo's metrization theorem is in general not valid for Finsler spacetimes. The culprit behind all counterexamples known to the authors is the fact that the affine Ricci tensor is in general not symmetric. Clearly the property that the affine Ricci tensor be symmetric is a necessary condition for metrizability. We will see (Theorem \ref{theorem:main}) that for $m$-kropina spacetimes with closed 1-form this is in fact also a sufficient condition, at least locally.

\section{m-Kropina metrics}\label{sec:VGR}
An $m$-Kropina space (sometimes called generalized Kropina space) is a Finsler space of $(\alpha,\beta)$-type with a Finsler function of the form $F = \alpha^{1+m}\beta^{-m}$, where $\alpha = \sqrt{a_{ij}(x)y^iy^j}$ is constructed from a \mbox{(pseudo-)Riemannian} metric $a = a_{ij}\D x^i\D x^j$, $\beta = b_iy^i$ is constructed from a 1-form $b = b_i\D x^i$ and $m$ is a real parameter. By a slight abuse of terminology one also refers to $\alpha$ and $\beta$ simply as the \mbox{(pseudo-)Riemannian} metric and the 1-form, respectively. We also introduce the notation $b^2 \equiv ||b||^2 = a_{ij}b^ib^j$ for the squared norm of $\beta$ with respect to $\alpha$. Throughout the remainder of this article all indices are raised and lowered with $a_{ij}$. \\
In the physics literature spacetimes with metric of $m$-Kropina type have been dubbed \textit{Very General Relativity} (VGR) spacetimes \cite{Fuster:2018djw} or \textit{General Very Special Relativity} (GVSR) spacetimes \cite{Kouretsis:2008ha}, introduced as generalizations of \textit{Very Special Relativity} (VSR) \cite{Cohen:2006ky,Gibbons:2007iu}, which appears in the limiting case where $\alpha$ is flat. In the latter case the corresponding $m$-Kropina metric is often referred to as the Bogoslovsky line element. When $m=1$ the $m$-Kropina metric reduces to the standard Kropina metric \cite{Kropina} $F = \alpha^2/\beta$.

\subsection{The Berwald condition}\label{sec:berwald_condition}
The Berwald condition for m-Kropina spaces $F = \alpha^{1+m}\beta^{-m}$ formulated by Matsumoto in\footnote{The result is proven only for \textit{non-null} 1-forms $\beta$ in Theorem 6.3.2.3 on page 904 of \cite{handbook_Finsler_vol2}, but as long as the dimension of the manifold is greater than 2 the proof is still completely valid also for null 1-forms, as the other results around Theorem 6.3.2.3 clearly show.} \cite{handbook_Finsler_vol2} states that such a space is of Berwald type if and only if there exists a vector field $f^i$ on $M$ such that
\begin{align}\label{eq:VGR_Berwald_new}
\nabla_j b_i = m (f_k b^k)a_{ij} + b_i f_j  - m f_i b_j.
\end{align}
Here and throughout the remainder or the article $\nabla$ denotes the Levi-Civita connection corresponding to the \mbox{(pseudo-)Riemannian} metric $\alpha$. In the special case that $\beta$ is a closed and hence locally exact 1-form, any $f_k$ satisfying this condition can always be written as $f_k = cb_k$ for some function\footnote{Note that our $c$ is related to $C(x)$ in \cite{Fuster:2018djw} by $C(x) = (1+m)c/2$. Also note that our power $m$ is related to the power $n$ in \cite{Fuster:2018djw} by $n = -2m/(1+m)$.} $c$ on the base manifold and the condition reduces to the simpler one obtained in \cite{Fuster:2018djw}, namely
\begin{align}\label{eq:VGR_Berwald_old}
\nabla_j b_i = c \left[ mb^2 a_{ij} + (1-m)b_i b_j\right],
%\nabla_\mu b_\nu = c(x) \left[2(1+n)b_\mu b_\nu - ng(\beta,\beta)g_{\mu\nu}\right].
\end{align}
To see this, assume that Matsumoto's Berwald condition \eqref{eq:VGR_Berwald_new} holds. We have $(\D b)(\partial_i,\partial_j)= \partial_i b_j -\partial_j b_i = \nabla_i b_j - \nabla_j b_i = (1+m)(f_i b_j - f_j b_i)$, so if $b_i$ is locally exact then this expression vanishes and hence $f_i b_j = f_j b_i$ must hold for all $i,j$, which is only possible if $f_i$ is proportional to $b_i$ (this can be checked easily at any given point in $M$ by choosing coordinates in which $b_i$ has only one non-vanishing component at that point). In other words, $f_k = cb_k$. In this case \eqref{eq:VGR_Berwald_new} reduces to \eqref{eq:VGR_Berwald_old}. \\
Note that the opposite holds (trivially) as well: the latter condition implies that $\beta$ is locally exact. The fact that \eqref{eq:VGR_Berwald_new} and \eqref{eq:VGR_Berwald_old} do not agree for 1-forms $\beta$ that are not closed has recently caused some confusion and this seems like a good opportunity to resolve this issue. It turns out the reason for the discrepancy is that the contribution of the anti-symmetric part of the covariant derivative of $\beta$ was overlooked in the proof given in \cite{Fuster:2018djw}. Indeed in appendix \ref{app:BerwCondProof} we reproduce the argument from \cite{Fuster:2018djw} taking the anti-symmetric part into account, and we show that the resulting Berwald condition coincides with \eqref{eq:VGR_Berwald_new}, as expected. Thus, we want to stress here again that \eqref{eq:VGR_Berwald_new} is the correct Berwald condition in general, whereas \eqref{eq:VGR_Berwald_old} only applies to the case in which the 1-form $\beta$ is closed.\\

As also proved in \cite{handbook_Finsler_vol2}, whenever the condition \eqref{eq:VGR_Berwald_new} is satisfied, the affine connection coefficients of the Berwald space can be expressed in terms of the Christoffel symbols ${}^\alpha\Gamma^k_{ij}$ for the Levi-Civita connection corresponding to $\alpha$ as
\begin{align}
\Gamma^\ell_{ij} = {}^\alpha\Gamma^\ell_{ij} + m a^{\ell k}\left(a_{ij}f_k - a_{jk}f_i - a_{ki}f_j\right).
\end{align}
When the 1-form $\beta$ is closed, and we write $f_k = cb_k$ as before, this reduces to
\begin{align}\label{eq:affine_conn}
\Gamma^\ell_{ij} = {}^\alpha\Gamma^\ell_{ij} + m c \left(a_{ij}b^\ell - \delta^\ell_j b_i - \delta^\ell_i b_j\right),
\end{align}
which agrees with the result obtained in \cite{Fuster:2018djw}.

\subsection{Metrizability of $m$-Kropina spaces with closed null 1-form}
\label{sec:metrizability}

From here onwards we will focus on $m$-Kropina metrics with closed null 1-form and we will assume that $n=\dim M>2$. In other words, we will assume that $\D b = 0$ and $b^2 = a_{ij}b^ib^j=0$. This will allow us to deduce the exact conditions for local metrizability. As a remark we point out that the case $m=1$ is excluded by our definition of Finsler space, as can be seen from the expression of the determinant of the fundamental tensor,
\begin{align}
\frac{\det g}{\det a}  = (1+m)^3\alpha^{8m}|\beta|^{-2(1+4m)}\left(m b^2\alpha^2 + (1-m)\beta^2\right),
\end{align}
which vanishes identically when $m=1$ and $b^2=0$.\\

%Let $M$ be a smooth manifold of dimension $n$ and $F = \alpha^{1+m}\beta^{-m}$ an $m$-Kropina metric on $M$ with $\beta$ null with respect to $\alpha$ and closed. 
The following lemma extends a result from \cite{Gomez-Lobo:2016qik}. We will use the convention that indices $a,b,c,\dots$ run from $3$ to $n$, whereas indices $i,j,k,\dots$ run from $1$ to $n$.

\begin{lem}\label{lem:coordinates}
$F$ is Berwald if and only if around each $p\in M$ there exist local coordinates $(u,v,x^3,\dots,x^n)$ such that\footnote{By the product notation $\D u\D v$ we mean the symmetrized tensor product $\D u \D v \equiv (1/2)(\D u\otimes \D v + \D v \otimes \D u)$ and similarly in other instances.}
\begin{align}
{\normalfont a = -2\D u\D v + H(u,v,x)\D u^2 + W_a(u,x)\D u\D x^a + h_{ab}(u,x)\D x^a\D x^b,\quad b = \D u},
\end{align}
with $h$ some \mbox{(pseudo-)Riemannian} metric. In this case the metric satisfies the Berwald condition $\eqref{eq:VGR_Berwald_old}$ with
\begin{align}
c =- \frac{\partial_v H}{2(1-m)}.
\end{align}
\end{lem}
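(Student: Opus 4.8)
The plan is to exploit the fact that, since $\beta$ is closed, the Berwald condition \eqref{eq:VGR_Berwald_old} together with $b^2=0$ collapses to the single tensorial equation $\nabla_j b_i = c(1-m)\,b_i b_j$. Writing $\lambda \equiv c(1-m)$ and setting up coordinates adapted to the metric dual $k^i \equiv a^{ij}b_j$ of $\beta$, the entire statement reduces to the coordinate identity that, in these coordinates, the Berwald condition is equivalent to $\partial_v a_{ij} = -2\lambda\,\delta^1_i\delta^1_j$.

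For the \emph{only if} direction I would first invoke the Poincar\'e lemma: since $\D b=0$, there is a function $u$ with $b=\D u$ near $p$, which I take as the first coordinate, so that $b_i=\delta^1_i$. The dual field $k^i=a^{ij}b_j$ is null, since $a_{ij}k^ik^j=b^2=0$, and satisfies $k(u)=\D u(k)=b^2=0$, so $u$ is constant along the flow of $k$; moreover $k$ is nowhere vanishing because $\beta\neq0$ and $a$ is non-degenerate. Choosing a hypersurface $N\ni p$ transverse to $k$, on which $u$ (restricted) together with functions $x^3,\dots,x^n$ serve as coordinates, and extending $u,x^a$ to be constant along the flow of $-k$ while letting $v$ be the corresponding flow parameter, the flow-box theorem produces coordinates $(u,v,x^3,\dots,x^n)$ in which $k=-\partial_v$. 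Lowering indices in $a_{ij}k^j=b_i$ then forces $a_{i2}=-\delta^1_i$, i.e. $a_{12}=-1,\ a_{22}=0,\ a_{2a}=0$ are all constant, while $a^{i1}=k^i=-\delta^i_2$. Consequently $a$ automatically takes the Kundt-like shape $a = a_{11}\D u^2 - 2\D u\D v + 2a_{1a}\D u\D x^a + a_{ab}\D x^a\D x^b$, although at this point $a_{11},a_{1a},a_{ab}$ may still depend on $v$.

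The remaining (algebraic) step extracts the Berwald content. Since $b_i$ and $a_{2j}$ are constant in these coordinates, $\nabla_j b_i = -{}^\alpha\Gamma^1_{ij}$, and using $a^{1k}=-\delta^k_2$ one finds ${}^\alpha\Gamma^1_{ij} = -\tfrac12\big(\partial_i a_{2j}+\partial_j a_{2i}-\partial_v a_{ij}\big) = \tfrac12\,\partial_v a_{ij}$. Hence the Berwald condition $\nabla_j b_i=\lambda\, b_i b_j$ is equivalent to $\partial_v a_{ij} = -2\lambda\,\delta^1_i\delta^1_j$. This says precisely that $a_{1a}$ and $a_{ab}$ are $v$-independent, yielding $W_a=2a_{1a}=W_a(u,x)$ and $h_{ab}=a_{ab}=h_{ab}(u,x)$ with $H=a_{11}$, which is the asserted form; the one surviving component gives $\partial_v H = -2\lambda = -2c(1-m)$, i.e. $c=-\partial_v H/[2(1-m)]$, as claimed. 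For the converse I would check directly that the displayed metric has $a^{ij}b_j=-\partial_v$ and $a_{2j}$ constant, so that the very same identity makes the Berwald condition hold with exactly this value of $c$.

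The main obstacle is the coordinate construction: producing Walker/Kundt-type coordinates in which $b=\D u$ and $k=-\partial_v$ hold simultaneously. Once these are in place, verifying the metric shape and reading off $c$ are routine. Care is needed to confirm that $u$ restricts to a non-degenerate coordinate on the transverse hypersurface $N$ (which follows from $\D u\neq 0$ together with $k(u)=0$ and $k\notin TN$), to ensure the flow-extension of $u$ is consistent with its original definition (guaranteed by $k(u)=0$), and to track the factor conventions in the symmetrized products so that indeed $W_a=2a_{1a}$.
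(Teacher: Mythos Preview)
Your proposal is correct and follows the same overall two-step structure as the paper's proof: first construct coordinates in which $b=\D u$ and $a_{vi}=-\delta^u_i$, then show that in these coordinates the null--closed Berwald condition $\nabla_j b_i=c(1-m)b_ib_j$ is equivalent to $\partial_v a_{ij}=-2c(1-m)\delta^u_i\delta^u_j$, which singles out $H=a_{uu}$ as the only $v$-dependent component and fixes $c$. The second step is essentially identical to the paper's, including the key computation $\nabla_i b_j=-\tfrac12\,\partial_v a_{ij}$.

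The only genuine difference is the coordinate construction. The paper first straightens the vector field $b^\sharp$ to $\partial_v$, writes $b=\D u$ via the Poincar\'e lemma, and then performs an explicit coordinate change $(v,x^2,\dots,x^n)\mapsto(v,u,x^3,\dots,x^n)$ whose Jacobian is computed by hand; the desired $\tilde a_{1i}$ are read off directly from the matrix product. You instead fix $u$ first and then apply the flow-box theorem to $-k=-b^\sharp$ on a transversal $N$ on which $u$ serves as a coordinate, obtaining $k=-\partial_v$ and hence $a_{v i}=-\delta^u_i$ immediately. Your route is more geometric and avoids the explicit Jacobian; the paper's route is more elementary and self-contained. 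The only delicate points in your construction are exactly the ones you flag: that $u|_N$ is a good coordinate (ensured by $\D u\neq 0$, $k(u)=0$, $k\notin TN$), that the flow-extension of $u$ agrees with the original $u$ (ensured by $k(u)=0$), and the factor $W_a=2a_{ua}$ from the symmetrized-product convention. With those checked, both arguments are complete.
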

\begin{proof}
First, we may pick coordinates $(v,x^2,\dots,x^n)$ around $p$ adapted to $b$ in the sense that $b =\partial_v$, i.e. $b^i = \delta^i_1$. At this point the metric has the general form $a = a_{ij}\D x^i\otimes\D x^j$. (Abusing notation a little bit, $b$ sometimes denotes the 1-form and sometimes the vector field uniquely corresponding to it via the isomorphism induced by $a$. It should be clear from context which is meant.). The null character of $b$ manifests as the fact that $a_{11}=a_{vv}=0$ in these coordinates. Because $b$ is closed and hence locally exact, we may write, locally, $b= \D u$ for some function $u(v,x^2,\dots,x^n)$.  Equivalently, $b_i = \partial_i u$. Note also that $\partial_i u = b_i = a_{ij}b^j = a_{ij}\delta^j_1 = a_{i 1}$. Since $a_{11}=0$ it follows that $\partial_v u=\partial_1 u=0$. As $b\neq 0$ by assumption, there must be some $i\geq 2$ such that $\partial_i u = a_{i1}\neq 0$ in a neighborhood of $p$. Order the coordinates $x^2,\dots,x^n$ such that this is true for $i=2$, i.e. assume without loss of generality that $a_{21}\neq 0$. Next define the map
\begin{align}
x = (v,x^2,\dots,x^n)\mapsto \tilde x = (v,u(x^2,\dots,x^n),x^3,\dots,x^n).
\end{align}
Its Jacobian matrix and its inverse are given by 
\begin{align}
J^i{}_j = \frac{\partial\tilde x^i}{\partial x^j} = \begin{pmatrix}
1 & 0 & 0 & 0&\hdots & 0 \\
0 & a_{21} & a_{31} & a_{41} & \hdots & a_{n1} \\
0 & 0 & 1 & 0 & \hdots & 0 \\
0 & 0 & 0 & 1 & \hdots & 0\\
\vdots & \vdots & \vdots & \vdots & \ddots & \vdots \\
0 & 0 & 0 & 0 & \hdots & 1
\end{pmatrix},\qquad (J^{-1})^i{}_j = \frac{\partial x^i}{\partial \tilde x^j} &= \begin{pmatrix}
1 & 0 & 0 & 0&\hdots & 0 \\
0 & 1/a_{21} & -a_{31}/a_{21} & -a_{41}/a_{21} & \hdots & a_{n1}/a_{21} \\
0 & 0 & 1 & 0 & \hdots & 0 \\
0 & 0 & 0 & 1 & \hdots & 0\\
\vdots & \vdots & \vdots & \vdots & \ddots & \vdots \\
0 & 0 & 0 & 0 & \hdots & 1
\end{pmatrix}
\end{align}
and since $\det J = a_{21}\neq 0$ this matrix is invertible, so $x\mapsto \tilde x$ is a local diffeomorphism at $p$. It remains to find the form of the metric in the new coordinates. We have
\begin{align}
\tilde a_{ij} = \frac{\partial x^k}{\partial \tilde x^i} \frac{\partial x^\ell}{\partial \tilde x^j}a_{k\ell},\qquad \text{i.e.} \qquad \tilde a = J^{-1T}aJ^{-1}.
\end{align}
%\begin{align}
%(J^{-1})^\mu{}_\nu = \frac{\partial x^\mu}{\partial \tilde x_\nu} &= \begin{pmatrix}
%1 & 0 & 0 & 0&\hdots & 0 \\
%0 & 1/g_{10} & -g_{20}/g_{10} & -g_{30}/g_{10} & \hdots & g_{n0}/g_{10} \\
%0 & 0 & 1 & 0 & \hdots & 0 \\
%0 & 0 & 0 & 1 & \hdots & 0\\
%\vdots & \vdots & \vdots & \vdots & \ddots & \vdots \\
%0 & 0 & 0 & 0 & \hdots & 1
%\end{pmatrix}, \\
%%
%(J^{-1}{}^T)_\nu{}^\mu &= \begin{pmatrix}
%1 & 0 & 0 & 0&\hdots & 0 \\
%0 & 1/g_{10} & 0 & 0 & \hdots & 0 \\
%0 & -g_{20}/g_{10} & 1 & 0 & \hdots & 0 \\
%0 & -g_{30}/g_{10} & 0 & 1 & \hdots & 0\\
%\vdots & \vdots & \vdots & \vdots & \ddots & \vdots \\
%0 & g_{n0}/g_{10} & 0 & 0 & \hdots & 1
%\end{pmatrix}.
%\end{align}
Therefore
\begin{align}
\tilde a_{11} &= (J^{-1}{}^T)_1{}^i a_{ij}(J^{-1}{})^j{}_1 = a_{11} = 0,\\
\tilde a_{12} &= (J^{-1}{}^T)_1{}^i a_{ij}(J^{-1}{})^j{}_2 = 1 \\
\tilde a_{1b} &= (J^{-1}{}^T)_1{}^i a_{ij}(J^{-1}{})^j{}_b = a_{12}(-a_{b1}/a_{21}) + a_{1b} = 0,\qquad b = 3,\dots, n.
\end{align}
This shows that $a = \tilde a_{ij}\D \tilde x^i\D \tilde x^j = 2\D u\D v + H\D u^2 + W_b\D u\D x^b + h_{bc}\D x^b\D x^c$ for certain functions $H$, $W_a$, $h_{ab}$, and hence after a redefinition $v \to -v$ we may write the metric in the form
\begin{align}
a = -2\D u\D v + H\D u^2 + W_b\D u\D x^b + h_{bc}\D x^b\D x^c.
\end{align}
It follows from the easily checked fact that $\det h = -\det a \neq 0$ that $h_{ab}$ is itself a \mbox{(pseudo-)Riemannian} metric of dimension $n-2$.\\
Our arguments thus far are independent of whether the $m$-Kropina space is of Berwald type or not. All we have used is that the \mbox{(pseudo-)Riemannian} metric $a$ admits a 1-form that is null and closed. We will prove next that the $m$-Kropina space is Berwald if and only if the functions $W_a$ and $h_{ab}$ do not depend on coordinate $v$. To this end we employ the Berwald condition \eqref{eq:VGR_Berwald_new}. In fact, since the 1-form is assumed to be closed we may use the simpler version, Eq. \eqref{eq:VGR_Berwald_old}. And since the 1-form is null ($b^2=0$) as well, this condition reduces to
\begin{align}\label{1}
\nabla_i b_j = c(1-m)b_ib_j.
\end{align}
The $m$-Kropina space is Berwald if and only if there exists a function $c$ on $M$ such that this condition is satisfied. On the other hand, computing $\nabla_i b_j$ explicitly in the new coordinates, using the fact that $b_i = \delta^u_i$ and $g^{ui} = -\delta^i_v$ and $g_{iv} = 0$, yields
\begin{align}\label{2}
\nabla_ib_j  = -\frac{1}{2}\frac{\partial a_{ij}}{\partial v}.
\end{align}
Combining equations \eqref{1} and \eqref{2}, using again that $b_i = \delta^u_i$, yields $c(1-m)\delta^u_i\delta^u_j = -\partial_va_{ij}/2$, or equivalently,
\begin{align}
c =- \frac{\partial_v H}{2(1-m)}\qquad \&\qquad \partial_v W_a = \partial_vh_{ab} = 0.
\end{align}
From this it follows that $F$ is Berwald if and only if $\partial_v W_a = \partial_vh_{ab} = 0$, and that $c$ is in that case given by the desired expression, completing the proof.
\end{proof}

From here onwards we will assume our space is Berwald. Substituting the form of $c$ into Eq. \eqref{eq:affine_conn} and using that $b_i = \delta^u_ i$ and consequently $b^\ell = a^{\ell k}b_k =a^{\ell k}\delta^u_k = a^{\ell u} = -\delta^\ell_v$, we obtain the following.

\begin{cor}\label{cor:connection_coeff}
In the coordinates $(u,v,x^3,\dots,x^n)$, the affine connection coefficients can be expressed in terms of the Levi-Civita Christoffel symbols ${}^\alpha\Gamma^k_{ij}$ of the \mbox{(pseudo-)Riemannian} metric $\alpha$ as
\begin{align}
\Gamma^k_{ij}  = {}^\alpha\Gamma^k_{ij}  + \Delta\Gamma^k_{ij}  \equiv {}^\alpha\Gamma^k_{ij}  + \frac{m}{2(1-m)}\partial_v H \left(a_{ij}\delta^k_v + \delta^k_j \delta^u_ i + \delta^k_i \delta^u_ j\right)
\end{align}
\end{cor}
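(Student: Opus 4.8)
The plan is to substitute directly into the general formula \eqref{eq:affine_conn} for the affine connection coefficients of a closed-$1$-form $m$-Kropina Berwald space, specialized to the adapted coordinates of Lemma \ref{lem:coordinates}. Everything needed is already in hand: the explicit form of the metric $a$, the value $c = -\partial_v H/[2(1-m)]$ established in the lemma, and the coordinate expression $b = \D u$. So this is purely a matter of evaluating the three tensorial terms in \eqref{eq:affine_conn} in the new chart.

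First I would record the components of the $1$-form: since $b = \D u$ we have $b_i = \partial_i u = \delta^u_i$, i.e. only the $u$-component is nonzero. Next I would determine the associated vector field $b^\ell = a^{\ell k}b_k = a^{\ell u}$, which is the $u$-column of the inverse metric. Rather than invert $a$ in full, I would simply verify the guess $b^\ell = -\delta^\ell_v$: lowering the index gives $a_{\ell k}(-\delta^k_v) = -a_{\ell v} = \delta^u_\ell$, where the last equality holds because the only nonvanishing component of the $v$-row of the metric is $a_{uv} = -1$ (this is exactly where the special Kundt-type structure of $a$ enters). Thus $-\delta^\ell_v$ indeed raises to $b_\ell = \delta^u_\ell$, confirming $b^\ell = -\delta^\ell_v$. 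This single line is the only genuine computation in the argument.

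Finally I would insert $c = -\partial_v H/[2(1-m)]$, $b_i = \delta^u_i$, and $b^\ell = -\delta^\ell_v$ into $\Gamma^\ell_{ij} = {}^\alpha\Gamma^\ell_{ij} + mc\left(a_{ij}b^\ell - \delta^\ell_j b_i - \delta^\ell_i b_j\right)$ and collect signs. The term $mc\,a_{ij}b^\ell$ carries two minus signs (one from $c$, one from $b^\ell = -\delta^\ell_v$) that cancel to produce $\tfrac{m}{2(1-m)}\partial_v H\, a_{ij}\delta^k_v$, while each of the remaining two terms $-mc\,\delta^\ell_{j}b_i$ and $-mc\,\delta^\ell_i b_j$ similarly yields $+\tfrac{m}{2(1-m)}\partial_v H\,\delta^k_j\delta^u_i$ and $+\tfrac{m}{2(1-m)}\partial_v H\,\delta^k_i\delta^u_j$ respectively. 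Relabelling $\ell \to k$ gives the stated expression. I expect no real obstacle here; the entire content of the corollary is the bookkeeping of these substitutions, and the only point requiring care is the sign of the inverse-metric component $b^\ell = -\delta^\ell_v$.
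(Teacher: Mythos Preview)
Your proposal is correct and follows essentially the same route as the paper: the paper simply states that one substitutes $c = -\partial_v H/[2(1-m)]$ into Eq.~\eqref{eq:affine_conn} together with $b_i = \delta^u_i$ and $b^\ell = a^{\ell k}b_k = a^{\ell u} = -\delta^\ell_v$, and the corollary follows. Your verification of $b^\ell = -\delta^\ell_v$ by lowering the index (rather than computing the inverse metric column directly) is a perfectly valid variant of the same step.
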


We can use the preceding results to analyze the (deviation from the) symmetry of the affine Ricci tensor, which has a very simple expression in these coordinates, as the following result shows.

\begin{lem}\label{lem:ricci}
In the coordinates $(u,v,x^3,\dots,x^n)$, the skew-symmetric part of the affine Ricci tensor is given by
\begin{align}
\bar R_{[ij]} = -\frac{mn}{4(1-m)}(\delta^u_i\partial_j\partial_vH-\delta^u_j\partial_i\partial_vH)
\end{align}
\end{lem}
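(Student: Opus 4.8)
The plan is to exploit the fact that, for any torsion-free affine connection, the skew-symmetric part of the Ricci tensor is the exterior derivative of the trace of the connection coefficients, so that only the trace $\Gamma^k_{kj}$ of the connection from Corollary \ref{cor:connection_coeff} can contribute. Concretely, I would write out the affine Ricci tensor from the contraction $\bar R_{ij} = \bar R_i{}^k{}_{kj}$ of Eq. \eqref{eq:affine_curvatures},
\begin{align}
\bar R_{ij} = \partial_k\Gamma^k_{ji} - \partial_j\Gamma^k_{ki} + \Gamma^k_{mk}\Gamma^m_{ji} - \Gamma^k_{mj}\Gamma^m_{ki},
\end{align}
and antisymmetrize in $(i,j)$. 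Since the affine connection is torsion-free, $\Gamma^k_{ij}$ is symmetric in its lower indices; this immediately makes the first and third terms symmetric in $(i,j)$, and the quadratic term $\Gamma^k_{mj}\Gamma^m_{ki}$ is symmetric as well, as one sees by swapping $i\leftrightarrow j$ and relabelling the dummy pair $k\leftrightarrow m$. Only the second term survives, yielding
\begin{align}
\bar R_{[ij]} = \partial_{[i}\Gamma^k_{|k|j]} = \tfrac{1}{2}\left(\partial_i\Gamma^k_{kj} - \partial_j\Gamma^k_{ki}\right).
\end{align}

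Next I would insert the decomposition $\Gamma^k_{ij} = {}^\alpha\Gamma^k_{ij} + \Delta\Gamma^k_{ij}$ of Corollary \ref{cor:connection_coeff} and note that the Levi-Civita part drops out automatically: the Christoffel trace is a pure gradient, ${}^\alpha\Gamma^k_{kj} = \partial_j\log\sqrt{|\det a|}$, so that $\partial_{[i}{}^\alpha\Gamma^k_{|k|j]} = 0$. Hence $\bar R_{[ij]} = \partial_{[i}\Delta\Gamma^k_{|k|j]}$ and the whole problem reduces to computing the trace one-form $\Delta\Gamma^k_{kj}$ of the tensor $\Delta\Gamma$.

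Finally I would compute this trace explicitly from the expression in Corollary \ref{cor:connection_coeff}. Contracting the upper index with the first lower index produces $\tfrac{m}{2(1-m)}\partial_v H\,(a_{vj} + \delta^u_j + n\,\delta^u_j)$, and here I would use the coordinate identities for the null, closed 1-form established in the proof of Lemma \ref{lem:coordinates} --- in particular $a_{vj} = -\delta^u_j$, which follows from $b_i = \delta^u_i$ together with $b^\ell = -\delta^\ell_v$ --- so that the first two contributions cancel and
\begin{align}
\Delta\Gamma^k_{kj} = \frac{mn}{2(1-m)}\,\partial_v H\,\delta^u_j.
\end{align}
Substituting this into $\bar R_{[ij]} = \tfrac{1}{2}(\partial_i\Delta\Gamma^k_{kj} - \partial_j\Delta\Gamma^k_{ki})$ and pulling out the constant factor then gives the claimed formula. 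The only genuinely delicate points are the cancellation bookkeeping in the antisymmetrization --- especially recognizing that the quadratic term becomes symmetric after the dummy-index relabelling --- and the correct use of the metric component $a_{vj} = -\delta^u_j$ in evaluating the trace; everything else is a routine contraction.
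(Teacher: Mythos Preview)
Your proposal is correct and follows essentially the same route as the paper: both reduce $\bar R_{[ij]}$ to $\tfrac{1}{2}(\partial_i\Gamma^k_{kj}-\partial_j\Gamma^k_{ki})$, drop the Levi-Civita contribution (you via the gradient identity ${}^\alpha\Gamma^k_{kj}=\partial_j\log\sqrt{|\det a|}$, the paper via symmetry of the Ricci tensor of $\alpha$), and then compute the trace $\Delta\Gamma^k_{kj}$ using $a_{vj}=-\delta^u_j$ exactly as you do. Your version is a bit more explicit about why the quadratic terms in the curvature drop out under antisymmetrization, but otherwise the arguments coincide.
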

\begin{proof}
From the definition \eqref{eq:affine_curvatures} of the affine Ricci tensor of a Berwald space it follows that its skew-symmetric part can be written as
\begin{align}
\bar R_{[ij]} \equiv \frac{1}{2}\left(\bar R_{ij} - \bar R_{ji}\right) = \partial_{[k}\Gamma^k_{j]i} - \partial_{[k}\Gamma^k_{i]j}.
\end{align}
We now use the expression for the connection coefficients found in Corollary \ref{cor:connection_coeff}. Note that 
\begin{align}
\Delta\Gamma^k_{kj} = \frac{m}{2(1-m)}\partial_v H\left(a_{vj} + \delta^u_ j+ n \delta^u_j\right) = \frac{mn}{2(1-m)}\partial_v H \delta^u_j.
\end{align}

Substituting this in the skew-symmetric part of the affine Ricci tensor we obtain
\begin{align}
\bar R_{[ij]} &= \partial_{[k}\Gamma^k_{j]i} - \partial_{[k}\Gamma^k_{i]j} = \partial_{[k}\Delta\Gamma^k_{j]i} - \partial_{[k}\Delta\Gamma^k_{i]j} = \frac{1}{2}\left(-\partial_{j}\Delta\Gamma^k_{ki} + \partial_{i}\Delta\Gamma^k_{kj}\right) \\
&= \frac{mn}{4(1-m)}\left(\delta^u_j\partial_{i}\partial_v H-\delta^u_i\partial_{j}\partial_v H  \right),
\end{align}
where we have used the fact that the Ricci tensor corresponding to $\alpha$ is symmetric.
\end{proof}

Let us now prove our main result.

\begin{theor}\label{theorem:main}
Let $(M,F=\alpha^{1+m}\beta^{-m})$ be an $m$-Kropina space of Berwald type with closed null 1-form $\beta$. The following are equivalent:
\begin{enumerate}[(i)]
\item The affine connection is locally metrizable by a \mbox{(pseudo-)Riemannian} metric
\item The affine Ricci tensor is symmetric, $\bar R_{ij} = \bar R_{ji}$
\item There exist local coordinates $(u,v,x^3,\dots,x^n)$ %of Lemma \ref{lem:coordinates}, the $\D u^2$ term in the metric is of the form $H(u,v,x) = \tilde H(u,x) + \phi(u)v$, i.e.
such that
\begin{align}
{\normalfont a = -2\D u\D v + \left[\tilde H(u,x) + \phi(u)v\right]\D u^2 + W_a(u,x)\D u\D x^a + h_{ab}(u,x)\D x^a\D x^b,\quad b = \D u},
\end{align}
with $h$ some \mbox{(pseudo-)Riemannian} metric of dimension $n-2$.
\end{enumerate}

In this case the affine connection is metrizable, in the chart corresponding to the coordinates $(u,v,x^3,\dots x^n)$, by the following \mbox{(pseudo-)Riemannian} metric:
\begin{align}\label{eq:metrizing_metric}
{\normalfont \tilde a = e^{\frac{m}{1-m}\int^u \phi(\tilde u)\D \tilde u } a}
\end{align}
\end{theor}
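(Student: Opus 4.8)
The plan is to close the cycle of implications (i) $\Rightarrow$ (ii) $\Rightarrow$ (iii) $\Rightarrow$ (i), with the explicit metrizing metric \eqref{eq:metrizing_metric} falling out of the last step. The implication (i) $\Rightarrow$ (ii) is immediate and essentially already recorded in the text preceding the theorem: by definition \eqref{eq:affine_curvatures} the affine Ricci tensor $\bar R_{ij}$ is the Ricci tensor of the affine connection, and the Ricci tensor of the Levi-Civita connection of \emph{any} (pseudo-)Riemannian metric is symmetric. Hence if $\Gamma^k_{ij} = {}^{\tilde a}\Gamma^k_{ij}$ for some metric $\tilde a$, then necessarily $\bar R_{ij} = \bar R_{ji}$.

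For (ii) $\Leftrightarrow$ (iii) I would invoke Lemma \ref{lem:ricci} directly in the coordinates of Lemma \ref{lem:coordinates}, which exist whenever $F$ is Berwald with closed null $\beta$. By that lemma $\bar R_{[ij]} = 0$ reads $\delta^u_i\partial_j\partial_v H = \delta^u_j\partial_i\partial_v H$ for all $i,j$; inspecting the cases $i=u,\,j\neq u$ (and vice versa) shows this is equivalent to $\partial_j\partial_v H = 0$ for every $j\neq u$, i.e.\ to $\partial_v H$ being a function of $u$ alone, say $\partial_v H = \phi(u)$. Integrating in $v$ gives $H = \phi(u)\,v + \tilde H(u,x)$, which is exactly the shape of the metric in (iii) (the $v$-independence of $W_a$ and $h_{ab}$ is already part of the Berwald conclusion of Lemma \ref{lem:coordinates}). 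Conversely, this form yields $\partial_v H = \phi(u)$ and hence $\bar R_{[ij]} = 0$, so both directions follow from the single lemma.

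The substance of the theorem is (iii) $\Rightarrow$ (i), where I would verify that \eqref{eq:metrizing_metric} metrizes the affine connection. Writing $\tilde a = e^{2\sigma}a$, the standard conformal-rescaling formula gives ${}^{\tilde a}\Gamma^k_{ij} = {}^\alpha\Gamma^k_{ij} + \delta^k_i\partial_j\sigma + \delta^k_j\partial_i\sigma - a_{ij}a^{kl}\partial_l\sigma$. The key observation is that the correction $\Delta\Gamma^k_{ij}$ of Corollary \ref{cor:connection_coeff}, with $\partial_v H = \phi(u)$, has exactly this conformal-deformation shape: setting $V_j = \tfrac{m}{2(1-m)}\phi(u)\delta^u_j$ and using $a^{ku} = -\delta^k_v$ (which follows from $b^\ell = -\delta^\ell_v$ as noted before Corollary \ref{cor:connection_coeff}), one checks $\delta^k_i V_j + \delta^k_j V_i - a_{ij}a^{kl}V_l = \Delta\Gamma^k_{ij}$. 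It then remains only to recognise the $1$-form $V = \tfrac{m}{2(1-m)}\phi(u)\,\D u$ as exact, with potential $\sigma = \tfrac{m}{2(1-m)}\int^u\phi(\tilde u)\D\tilde u$, so that $\partial_j\sigma = V_j$. This yields $\tilde a = e^{2\sigma}a = e^{\frac{m}{1-m}\int^u\phi}a$, matching \eqref{eq:metrizing_metric}, and ${}^{\tilde a}\Gamma = \Gamma$, establishing local metrizability.

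I expect the only genuinely delicate point to be the bookkeeping in this last step---correctly assembling the conformal deformation tensor and handling the contraction via $a^{ku} = -\delta^k_v$---rather than any conceptual difficulty. It is worth stressing \emph{why} the construction must fail outside case (iii): the correction $\Delta\Gamma$ always has conformal-deformation form with $1$-form $V = \tfrac{m}{2(1-m)}\partial_v H\,\D u$, but $V$ is closed, hence exact and the rescaling globally defined, precisely when its coefficient depends on $u$ alone, i.e.\ exactly under (iii). This closure requirement is the very obstruction measured by the skew part of $\bar R_{ij}$ in Lemma \ref{lem:ricci}, which is what forces the three conditions to coincide.
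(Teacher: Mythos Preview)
Your proposal is correct and follows essentially the same route as the paper: (i)$\Rightarrow$(ii) is the trivial observation on Levi--Civita Ricci tensors; (ii)$\Rightarrow$(iii) is deduced from Lemma~\ref{lem:ricci} by reading off $\partial_j\partial_v H=0$ for $j\neq u$; and (iii)$\Rightarrow$(i) is obtained by matching $\Delta\Gamma^k_{ij}$ from Corollary~\ref{cor:connection_coeff} against the conformal-change formula for $\tilde a=e^{2\sigma}a$, with $\sigma$ a primitive of $\tfrac{m}{2(1-m)}\phi(u)\,\D u$. The only cosmetic difference is that the paper specializes the conformal computation to $\sigma=\sigma(u)$ from the outset rather than quoting the general formula, and it does not include your closing remark interpreting the failure of metrizability outside~(iii) as non-closedness of the $1$-form $V$.
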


Before we present the proof, we want to point out two things. First, we note that if $\phi = 0$ then $\tilde a = a$, i.e. the affine connection is metrizable by the defining \mbox{(pseudo-)Riemannian} metric $\alpha$. This was to be expected, since in that case the 1-form $\beta$ is parallel with respect to $\alpha$. It is a well-known result that any $(\alpha,\beta)$-metric for which $\beta$ is parallel with respect to $\alpha$, is of Berwald type, and that its affine connection coicides with the Levi-Civita connection of $\alpha$. Second, since $\tilde a$ is conformally equivalent to $a$, the two metrics have identical causal structure and moreover their null geodesics coincide (as unparameterized curves). This implies that the null geodesics of any $F$ satisfying any (and hence all) of the equivalent conditions of Theorem \ref{theorem:main} coincide with the null geodesics of the defining \mbox{(pseudo-)Riemannian} metric $\alpha$.

\begin{proof}
(i) trivially implies (ii). For (ii)$\Rightarrow$(iii) we use the preferred coordinates introduced in the lemma above. By Lemma \ref{lem:ricci}, the only non-vanishing skew symmetric components of the affine Ricci tensor are
\begin{align}
\bar R_{[uj]} = -\frac{mn}{4(1-m)}\partial_j\partial_vH,\qquad j = 2,\dots, n.
\end{align}
Note that the fact that there's an index $u$ on the LHS and an index $v$ on the RHS is not a typo. The anti-symmetric part of the $uj$ component of the Ricci tensor, are determined by the $vj$-derivative of $H$. By assumption the Ricci tensor is symmetric. The $uv$ component therefore yields  $\partial_v^2 H = 0$ and the remaining components yield $\partial_v\partial_{a} H=0$, $a = 3,\dots n$. %
%\begin{align}
%R_{01}-R_{10} &= \lambda\partial_v^2 H,\\
%
%R_{02}-R_{20} &= \lambda\partial_v\partial_{x^1} H, \\
%R_{03}-R_{30} &= \lambda\partial_v\partial_{x^2} H,\\
%\dots\\
%\quad R_{0i} - R_{i0} &= \lambda\partial_v\partial_i H,\\
%\dots\\
%\quad R_{0(n-1)} - R_{(n-1)0} &= \lambda\partial_v\partial_{x^{n-1}} H
%\end{align}
In other words, $H$ must be linear in $v$ and the corresponding linear coefficient can depend only on the coordinate $u$.  That is,

\begin{align}
g = -2\D u\D v + \left[\tilde H(u,x) + \phi(u)v\right]\D u^2 + W_a(u,x)\D u\D x^a + h_{ab}(u,x)\D x^a\D x^b.
\end{align}

This proves $(ii)\Rightarrow(iii)$. For the last implication $(iii)\Rightarrow (i)$, recall from Corollary \ref{cor:connection_coeff} that the affine connection coefficients can be expressed as
\begin{align}
\Gamma^k_{ij}= {}^\alpha\Gamma^k_{ij}  + \Delta\Gamma^k_{ij}  \equiv {}^\alpha\Gamma^k_{ij}  + \frac{m}{2(1-m)}\phi(u) \left(a_{ij}\delta^k_v + \delta^k_j \delta^u_ i + \delta^k_i \delta^u_ j\right)
\end{align}

On the other hand, an elementary calculation shows that the Levi-Civita Christoffel symbols of a \mbox{(pseudo-)Riemannian} metric $\tilde a = e^{\psi(u)}a$ can be expressed in terms of the original Christoffel symbols as
\begin{align}
{}^{\tilde a}\Gamma^\ell_{ij} = {}^\alpha\Gamma^k_{ij} +\frac{1}{2}\psi'(u) \left(a_{ij}\delta^\ell_v + \delta^\ell_j \delta^u_ i + \delta^\ell_i \delta^u_ j\right).
\end{align}
Hence, since $\psi'(u) = -2mc = \frac{m}{1-m}\phi(u)$ for the \mbox{(pseudo-)Riemannian} metric $\tilde a$ indicated in the theorem, it follows that the connection coefficients of $\tilde a$ coincide with the affine connection coefficients of our $m$-Kropina metric. This completes the proof of the theorem.
\end{proof}

Theorem \ref{theorem:main} provides necessary and sufficient conditions for an m-Kropina space with closed null 1-form to be locally metrizable. In the next section we apply our results to an explicit example from the physics literature.

\subsection{An explicit example: Finsler VSI spacetimes}

In this section we apply our results to the Finsler VSI spacetimes presented in \cite{Fuster:2018djw}, with the 4-dimensional Finsler metric
\begin{align}
F = \left(-2\D u \D v +\left[\tilde H(u,x,y) + \phi(u,x,y)v\right]\D u^2 + W_1(u,x,y)\D u\D x + W_2(u,x,y)\D u\D y +\D x^2 + \D y^2\right)^{(1+m)/2}\left(\D u\right)^{-m},
\end{align}
By Lemma 2, this spacetime is of Berwald type. It is in general not metrizable since the corresponding affine Ricci tensor is not symmetric. By Theorem \ref{theorem:main}, the exact condition for metrizability in this case is that $\partial_x\phi = \partial_y\phi = 0$. The case $\phi=0$ provides a Finsler version of the gyratonic pp-wave metric \cite{gyraton,Maluf2018}, which according to Theorem \ref{theorem:main} is metrizable by the Lorentzian gyratonic pp-wave metric.\\

A simple non-trivial locally metrizable example is provided by the case where $\tilde H(u,x,y) = 0$, $\phi(u,x,y)=u$ and $W_a(u,x,y) =0$. This leads to the Finsler metric
\begin{align}
F = \left(-2\D u \D v + u\,v\,\D u^2 + \D x^2 + \D y^2\right)^{(1+m)/2}\left(\D u\right)^{-m},
\end{align}
which has an affine connection given by following non-vanishing affine connection coefficients:
\begin{align}
\Gamma^u_{uu} = \frac{1+m}{2(1-m)}u, \quad \Gamma^v_{uu} = -\frac{1-m-u^2}{2(1-m)}v, \quad \Gamma^v_{uv} = -\frac{u}{2}, \quad \Gamma^v_{xx} = \Gamma^v_{yy} = \Gamma^x_{ux} = \Gamma^y_{uy} = -\frac{-m}{2(1-m)}u.
\end{align}

As indicated by Eq. \eqref{eq:metrizing_metric} in Theorem \ref{theorem:main} this connection is metrizable by the Lorentzian metric
\begin{align}
\tilde g = e^{\frac{m u^2}{2(1-m)} }\left(-2\D u \D v + u\,v\,\D u^2 + \D x^2 + \D y^2\right).
\end{align}

%%%%%%%%%%%%%%%%%%%%%%%%%%%%%%%%%%%%%%%%%%%%%%%%%%%%%%%

%%%%%%%%%%%%%%%%%%%%%%%%%%%%%%%%%%%%%%%%%%%%%%%%%%%%%%%

%%%%%%%%%%%%%%%%%%%%%%%%%%%%%%%%%%%%%%%%%%%%%%%%%%%%%%%
\section{Discussion}\label{sec:conc}

Recent developments around the non-metrizability of Berwald spaces of indefinite (in particular, Lorentzian) signature contrast the well-known metrizability theorem by Szabo for \textit{positive definite} Berwald spaces. These findings inspired us to investigate the question of metrizability for $m$-Kropina Finsler metrics constructed from a \mbox{(pseudo-)Riemannian} metric and a closed null 1-form, in this article. Our main result, Theorem \ref{theorem:main}, gives a necessary and sufficient condition for local metrizability, namely that the \textit{affine Ricci tensor} -- the Ricci tensor constructed from the affine connection, not to be confused with the more commonly discussed Finsler Ricci tensor -- must be symmetric.

%Inspired by the recent developments around the non-metrizability of Berwald spaces of indefinite (in particular, Lorentzian) signature, contrasting the well-known metrizability theorem by Szabo for \textit{positive definite} spaces, we have investigated, in this article, the question of metrizability for $m$-Kropina metrics Finsler metrics constructed from a \mbox{(pseudo-)Riemannian} metric and a closed null 1-form. Our main result, Theorem \ref{theorem:main}, gives a necessary and sufficient condition for local metrizability, stating that the \textit{affine Ricci tensor} -- the Ricci tensor constructed from the affine connection, not to be confused with the more commonly discussed Finsler Ricci tensor -- must be symmetric. 

Also, in the coordinates introduced in Lemma \ref{lem:coordinates}, any Berwald $m$-Kropina metric attains a pretty simple form. It can then be seen at a glance whether a given geometry is locally metrizable or not. Moreover, in the metrizable case our theorem gives the explicit form of a (non-unique) \mbox{(pseudo-)Riemannian} metric that `metrizes' the affine connection, in terms of those coordinates.\\

The question of metrizability is not only a natural one from the mathematical point of view, but it is also of interest in the realm of physics. In particular, in the field of Finsler gravity,  which asserts that the spacetime geometry of our physical universe might be Finslerian. One of its postulates is that physical objects and light rays moving only under the influence of gravity, follow Finslerian geodesics through spacetime. If the Finsler metric on spacetime were metrizable, this would imply that these trajectories reduce to the geodesics of a \mbox{(pseudo-)Riemannian} metric, precisely as is the case in Einstein gravity\footnote{It is not the case that the whole theory can be reduced to \mbox{(pseudo-)Riemannian} geometry, however. In order accurately describe, for instance, the causal character of the geodesics, or the flow of proper time attributed to observers, one would still need the Finsler metric.}. Apart from obvious mathematical implications, it would be interesting to investigate the conceptual and physical consequences of this as well.\\

It would obviously be of great interest to have a generalization of Theorem \ref{theorem:main} to arbitrary Finsler spaces of Berwald type. To this effect, we note that, curiously, all examples of non-metrizable Berwald spaces currently available in the literature, as well as all of the additional examples known privately to the authors, have an affine Ricci tensor that is not symmetric. Together with the results obtained in this article in the specific case of $m$-Kropina metrics, this leads us to hypothesize that perhaps a Berwald space is metrizable by a \mbox{(pseudo-)Riemannian} metric if and only if its affine Ricci tensor is symmetric. In fact, some general results about Riemann-metrizability of arbitrary symmetric affine connections are known \cite{Tanaka:2011we,Tamassy1996,Schmidt1973}. An affine connection is metrizable if and only if the holonomy group is a subgroup of the generalized \mbox{(pseudo-)orthogonal} group \cite{Schmidt1973}. Hence, a future project is to investigate the structure of the holonomy group of the affine connection corresponding to a Berwald space and how it relates to the geometry-defining Finsler function.

\begin{acknowledgments}
 C.P. was funded by the Deutsche Forschungsgemeinschaft (DFG, German Research Foundation) - Project Number 420243324 and acknowledges support from cluster of excellence Quantum Frontiers funded by the Deutsche Forschungsgemeinschaft (DFG, German Research Foundation) under Germany's Excellence Strategy - EXC-2123 QuantumFrontiers - 390837967. All of us would like to acknowledge networking support by the COST Action CA18108, supported by COST (European Cooperation in Science and Technology).
\end{acknowledgments}
	
\newpage
\appendix

\section{Proof of the Berwald condition for m-Kropina metrics}\label{app:BerwCondProof}
Here we provide a proof of the Berwald condition \eqref{eq:VGR_Berwald_new} for m-Kropina spaces $F = \alpha^{1+m}\beta^{-m}$, which also serves as extension for the proof presented in \cite{Fuster:2018djw}, where it was overlooked that the 1-form $\beta$ need not be closed. The derivations in this section have been performed with help of the xAct extension of Mathematica \cite{xact}.

The Finsler metric $L$ for $m$-Kropina spaces is given by $F = (a_{ij}(x)y^i y^j)^{\frac{1+m}{2}} (b_k(x)y^k)^{-m}$. Using the decomposition
\begin{align}
	\nabla_i b_j =  A_{ij}  + S_{ij}\,,
\end{align}
where $A_{ij} = A_{[ij]}(x)$ is the anti-symmetric and $S_{ij} = S_{(ij)}(x)$ is the symmetric part of the covariant derivative, we find a geodesic spray $G^j = N^j_i y^i$ of the form
\begin{align}
	G^k 
	&= {}^\alpha\Gamma^k{}_{ij}(x)y^i y^j \\
	&+ y^{i} A^k{}_{i}\frac{\alpha^2 m  }{\beta (m+1)}
	- b^{k} \frac{\alpha^2 m  (\beta (m+1)  y^{i} y^{j} S_{ij}  -2 m  \alpha^2 b^{i} y^{j} A_{ij})}{2 \beta (m+1) \left(\beta^2 (m-1)-b^2 \alpha^2 m\right)}
	+  y^{k} \frac{m (\beta (m+1) y^{i} y^{j} S_{ij} - 2 m  \alpha^2 b^{i} y^{j} A_{ij})}{(m+1) \left(\beta^2 (m-1)-b^2 \alpha^2 m\right)}\,.\label{eq:GeodFinsBerwald}
\end{align}
Indices are raised and lowered with the components of the \mbox{(pseudo-)Riemannian} metric defining $\alpha$. In order to be of Berwald type the components $G^k$ need to be quadratic functions of $y$. This is the case, since for a Berwald space $N^j_i(x,y) = \Gamma^j_{ik}(x)y^k$ and so $G^j(x,y) = N^j_i(x,y) y^i = \Gamma^j_{ik}(x)y^k y^i$.

To reach this goal, the first term in \eqref{eq:GeodFinsBerwald} must either cancel with one of the other terms appearing or, the contraction $y^{b} A^k{}_{b}$ must lead to a term proportional to $\beta$. Hence, the free index on  $y^{b} A^k{}_{b}$ must either be on $y^k$, $b^k$ or $Z^k=Z^k(x)$, where $Z^k$ are the components of another vector field $Z$ on $M$, in the following way
\begin{align}
	y^{j} A^k{}_{j} = \beta Z^k + y^k T + b^k U_i y^i\,,
\end{align}
for $T = T(x)$ being a function and $U_i=U_i(x)$ being the components of a 1-form on $M$. These are the only possible terms, since by construction, $y^{b} A^k{}_{b}$ is a linear function in $y$, and so the RHS must be as well. Factoring the linear dependence in $y$ on both sides of the equation leads to
\begin{align}
	A^k{}_{j} = b_j Z^k + T \delta^k_j + b^k U_j\,,
\end{align}
which then implies by the anti-symmetry $A_{ij} = A_{[ij]}$
\begin{align}\label{eq:asymcdb}
	A_{ij} = b_i (U_j - Z_j) - b_j (U_i - Z_i)\,.
\end{align}
Defining $f_j=  (U_j - Z_j)$, we see that a necessary condition for a m-Kropina space to be Berwald is that the anti-symmetric part of the covariant derivative of the 1-form $\beta$ is determined by $b_i$ and an additional 1-form with components $f_j$. Using
\begin{align}
	\nabla_i b_j = (m+1) (b_i f_j - b_j f_i)  + S_{ij}\,,
\end{align}
where the factor $(m+1)$ was added in front of the anti-symmetric part to display the following expressions more compactly. For the geodesic spray one finds
\begin{align}\label{eq:GKropina}
	G^k 
	= {}^\alpha\Gamma^k{}_{ij}(x)y^i y^j 
	&+ m \alpha^2 f^k\\
	&+ m \alpha^2 b^k \frac{2(m \alpha^2 b^i - (m-1)\beta y^i)f_i - S_{ij}y^i y^j}{2((m-1)\beta^2 - m b^2 \alpha^2)} 
	+ m y^k \frac{2 m \alpha^2 (b^2 y^i - \beta b^i) f_i + \beta S_{ij}y^i y^j}{((m-1)\beta^2 - m b^2 \alpha^2)}\,.\label{eq:GKropina2}
\end{align}
The use of the derived expression for the anti-symmetric part of the covariant derivative \eqref{eq:asymcdb} ensures that the second term in the geodesic spray above is quadratic in $y$. To achieve this for the third term for the case $m\neq 1$ let us investigate the structure of the $y$-dependence of this term. It is of the type
\begin{align}
	B(y,y)\frac{C(y,y)-S(y,y)}{D(y,y)}\,,
\end{align}
where each term $X(y,y)=X_{ij}y^i y^j, X=B,C,D,S,P$, denotes a quadratic polynomial in $y$ and $B(y,y)=\alpha^2$. In order for this function to be quadratic in $y$ it must satisfy
\begin{align}
	B(y,y)\frac{C(y,y)-S(y,y)}{D(y,y)} = P(y,y) \quad \Leftrightarrow\quad C(y,y)-S(y,y) = \frac{P(y,y)D(y,y)}{B(y,y)}\,,
\end{align}
for some second order polynomial $P(y,y)$. Since the left hand side is a second order polynomial in y, the right hand side must be. By construction $B(y,y)$ is an irreducible quadratic polynomial in $y$. As long as $m\neq 1$, $D\neq h(x) B(y,y)$. Thus, $P(y,y)$ must satisfy $P(y,y) = h(x) B(y,y)$, for a solution of the equation to exist. Hence, the fraction in the first term of line \eqref{eq:GKropina2} must be proportional to an arbitrary function $h=h(x)$ on $M$. This yields to the equation
\begin{align}
	2(m \alpha^2 b^i - (m-1)\beta y^i)f_i - S_{ij}y^i y^j = h (2((m-1)\beta^2 - m b^2 \alpha^2))\,.
\end{align}
Taking two derivatives w.r.t. $y$ we find
\begin{align}
	S_{ij} = m a_{ij} (h b^2 + 2 b^k f_k) - (m-1) (b_i f_j + b_j f_i + h b_i b_j)\,.
\end{align}
Redefining $f_i$ as $f_i = \frac{1}{2} (\tilde f_i -  b_i h)$ and combining all expressions for the covariant derivative of $\beta$ finally gives the desired expression \eqref{eq:VGR_Berwald_new}
\begin{align}
	\nabla_i b_j = m (\tilde f^k b_k) a_{ij} + b_j \tilde f_i - m b_i \tilde f_j\,.
\end{align}
One can easily check that this condition on $b_j$ leads to a geodesic spray 
\begin{align}
	G^k = {}^\alpha\Gamma^k{}_{ij}(x)y^i y^j + \frac{m}{2} \alpha^2 \tilde f^k - m y^k y^i \tilde f_i\,,
\end{align}
which indeed is quadratic, and so the $m$-Kropina space subject to the condition \eqref{eq:VGR_Berwald_new} is indeed Berwald.\\

For $m=1$ and $b^2 \neq 0$ the first term in line \eqref{eq:GKropina2} is quadratic in $y$ for any tensor components $S_{ij}$ and we must investigate the second term of that line, which becomes
\begin{align}
	- m y^k \frac{2 \alpha^2 (b^2 y^i - \beta b^i) f_i + \beta S_{ij}y^i y^j}{(b^2 \alpha^2)}\,,
\end{align}
and can only be quadratic in $y$ if and only if
\begin{align}
	 \frac{\beta S_{ij}y^i y^j}{(b^2 \alpha^2)} = Q_i y^i
\end{align}
for some $1$-form on $M$ with components $Q_i=Q_i(x)$. The only way to achieve this is if $S_{ij} = q a_{ij}$ for some function $q=q(x)$ on $M$, which then must satisfy
\begin{align}
	 b_i q = Q_i b^2 \Rightarrow Q_i\sim b_i,\ q \sim b^2 \,.
\end{align}
Thus, for $m=1$
\begin{align}
	S_{ij} \sim a_{ij}  b^2\,.
\end{align}
For $m=1$ and $b^2 = 0$ the determinant of the metric $g$ vanishes globally, and hence this situation does not define a Finsler space or spacetime.

%\section{Properties of Kundt metrics}
%\textcolor{blue}{Perhaps it is useful to summarize some of the properties that we use in the proofs here.}
%\begin{align}
%g = 
%\begin{pmatrix}
% H & 1 & W_a \\
% 1 & 0 & 0 \\
%W_a & 0 & h_{ab} 
%\end{pmatrix} \qquad \Rightarrow \qquad g^{-1} = \begin{pmatrix}
%0 & 1 & 0 \\
% 1 & -H + h^{ab}W_aW_b & -h^{ab}W_b \\
%0 & -h^{ab}W_b & h^{ab} 
%\end{pmatrix},\quad \det g = -\det h
%\end{align}
%
%Let $\beta = \D u$.
%\begin{align}
%\Rightarrow \qquad \Gamma^0_{\mu\nu} = -\frac{1}{2}\frac{\partial g_{\mu\nu}}{\partial v}\qquad \Rightarrow \qquad \nabla_\mu b_\nu  = \frac{1}{2}\frac{\partial g_{\mu\nu}}{\partial v}.
%\end{align}
%Hence
%\begin{align}
%\nabla_\mub_\nu \stackrel{!}{=} \lambda b_\mub_\nu \quad \text{implies that} \quad  W_a,  h_{ab} \quad \text{do not depend on coordinate} \quad v
%\end{align}

%\newpage
%%%%%%%%%%%%%%%%%%%%%%%%%%%%%%%%%%%%%%%%%%%%%%%%%%%%%%%

%%%%%%%%%%%%%%%%%%%%%%%%%%%%%%%%%%%%%%%%%%%%%%%%%%%%%%%
\bibliographystyle{utphys}
\bibliography{VGRmetrizability}

\end{document}